\documentclass[fleqn,reqno,11pt,a4paper,final]{amsart}

\usepackage[a4paper,left=30mm,right=30mm,top=30mm,bottom=30mm,marginpar=20mm]{geometry}
\usepackage{amsmath}
\usepackage{amssymb}
\usepackage{amsthm}
\usepackage{amscd}
\usepackage{mathtools}
\usepackage[ansinew]{inputenc}
\usepackage{cite}
\usepackage{bbm}
\usepackage{color}
\usepackage[english=american]{csquotes}
\usepackage[final]{graphicx}
\usepackage{hyperref}
\usepackage{calc}
\usepackage{mathptmx}
\usepackage{t1enc}

\linespread{1.1}

\graphicspath{{../Pictures/}}

\numberwithin{equation}{section}

\newtheoremstyle{thmlemcorr}{10pt}{10pt}{\itshape}{}{\bfseries}{.}{10pt}{{\thmname{#1}\thmnumber{ #2}\thmnote{ (#3)}}}
\newtheoremstyle{thmlemcorr*}{10pt}{10pt}{\itshape}{}{\bfseries}{.}\newline{{\thmname{#1}\thmnumber{ #2}\thmnote{ (#3)}}}
\newtheoremstyle{remexample}{10pt}{10pt}{}{}{\bfseries}{.}{10pt}{{\thmname{#1}\thmnumber{ #2}\thmnote{ (#3)}}}
\newtheoremstyle{ass}{10pt}{10pt}{}{}{\bfseries}{.}{10pt}{{\thmname{#1}\thmnumber{ A#2}\thmnote{ (#3)}}}

\theoremstyle{thmlemcorr}
\newtheorem{theorem}{Theorem}
\numberwithin{theorem}{section}
\newtheorem{lemma}[theorem]{Lemma}

\theoremstyle{thmlemcorr*}
\newtheorem{theorem*}{Theorem}
\newtheorem{lemma*}[theorem]{Lemma}
\newtheorem{corollary*}[theorem]{Corollary}
\newtheorem{proposition*}[theorem]{Proposition}
\newtheorem{problem*}[theorem]{Problem}
\newtheorem{conjecture*}[theorem]{Conjecture}
\newtheorem{definition*}[theorem]{Definition}

\theoremstyle{remexample}
\newtheorem{remark}[theorem]{Remark}

\theoremstyle{ass}


\newcommand{\Ccal}{\mathcal{C}}

\newcommand{\R}{\mathbb{R}}

\newcommand{\T}{\mathbb{T}}
\newcommand{\Tbb}{\mathbb{T}}
\newcommand{\norm}[1]{\|#1\|}
\newcommand{\eps}{\epsilon}

\newcommand{\intT}[1]{\int_{\T^d}  #1}
\newcommand{\dxdt}{ dx dt}
\newcommand{\dx}{ dx}

\DeclareMathOperator{\diverg}{div}
\DeclareMathOperator{\dist}{dist}
\DeclareMathOperator{\supp}{supp}

\renewcommand{\eps}{\varepsilon}
\renewcommand{\epsilon}{\varepsilon}
\renewcommand{\phi}{\varphi}

\begin{document}

\title{Conservation of energy for the Euler-Korteweg equations}
\author{Tomasz D\k{e}biec \and Piotr Gwiazda \and Agnieszka \'{S}wierczewska-Gwiazda \and Athanasios Tzavaras.
}
\address{\textit{Tomasz D\k{e}biec:} Institute of Applied Mathematics and Mechanics, University of Warsaw, Banacha 2, 02-097 Warszawa, Poland}
\email{t.debiec@mimuw.edu.pl}
\address{\textit{Piotr Gwiazda:} Institute of Mathematics, Polish Academy of Sciences, \'Sniadeckich 8, 00-656 Warszawa, Poland, and Institute of Applied Mathematics and Mechanics, University of Warsaw, Banacha 2, 02-097 Warszawa, Poland}
\email{pgwiazda@mimuw.edu.pl}
\address{\textit{Agnieszka \'{S}wierczewska-Gwiazda:} Institute of Applied Mathematics and Mechanics, University of Warsaw, Banacha 2, 02-097 Warszawa, Poland}
\email{aswiercz@mimuw.edu.pl}
\address{\textit{Athanasios Tzavaras:} Computer, Electrical, Mathematical Sciences and Engineering Division, King Abdullah University of Science and Technology (KAUST), Thuwal, Saudi Arabia}
\email{athanasios.tzavaras@kaust.edu.sa}

\maketitle

\begin{abstract}
In this article we study the principle of energy conservation for the Euler-Korteweg system. 
We formulate an Onsager-type sufficient regularity condition for weak solutions of the Euler-Korteweg system to conserve the total energy.
The result applies to the system of Quantum Hydrodynamics.
\end{abstract}

\section{Introduction}
It is known since the works of Scheffer \cite{scheffer} and Shnirelmann \cite{shnirel} that weak solutions of the incompressible Euler equations exhibit behaviour very different to that of classical solutions. These "wild solutions", as they are  called since the seminal works of DeLellis and Sz\'ekelyhidi \cite{DLS09, DLS10},  are often highly unphysical 
- for instance there is a lack of uniqueness and the principle of conservation of energy can be violated.\\
\indent Dissipative solutions of incompressible Euler have been extensively studied in relation to the seminal Onsager conjecture \cite{On1949}. It states that there is a threshold regularity, namely $\frac13$-H\"{o}lder continuity, above which kinetic energy must be conserved, and below which anomalous dissipation might occur. This conjecture has been recently fully resolved, with non-conservative solutions of class $\Ccal([0,T];\Ccal^{\frac13 -}(\T^3))$ constructed by Isett \cite{isett}. See also \cite{BucDeLSzV} and \cite{isett2} for further developments on the subject.
\\
\indent The positive direction of Onsager's conjecture has been settled already in the 1990's by Constantin et al. \cite{ConstETiti} (after a partial result of Eyink \cite{eyink}). 
The method of mollification and estimation of commutator errors was employed to prove that, if a weak solution of the incompressible Euler system belongs to $u\in L^3([0,T],B_3^{\alpha,\infty}(\T^3))\cap\Ccal([0,T],L^2(\T^3))$, then the energy $\norm{u}_{L^2(\T^3)}$ is conserved in time.
The method of proof as well as the observation that Besov spaces provide a suitable environment for this kind of problem were later used by several authors in the context of other systems of fluid dynamics: like inhomogeneous incompressible Euler and compressible Euler \cite{FGSGW}, incompressible and compressible Navier-Stokes (resp. \cite{DuRo}, \cite{LeSh} and~\cite{DrivasEyink},~\cite{Yu}),  incompressible magnetohydrodynamics~\cite{KangLee},~\cite{Cafetal}, and general systems of  first order conservation laws ~\cite{GMSG}.  Onsager's conjecture was recently studied for incompressible Euler equations in bounded domains, cf. \cite{BarTiti}.
An overview of these results can be found in \cite{DGSG}.\\
\indent In the present paper we adapt the strategy of Constantin et al. \cite{ConstETiti} and Feireisl et al. \cite{FGSGW} to obtain an Onsager-type sufficient condition on the regularity of weak solutions to the Euler-Korteweg equations so that they conserve the total energy. We consider the isothermal Euler-Korteweg system in the from
\begin{equation}\label{EKintro}
\begin{aligned}
\partial_t(\rho u)+\diverg(\rho u\otimes u)&=-\rho \nabla\left(h'(\rho)+\frac{\kappa'(\rho)}{2}|\nabla\rho|^{2}-\diverg(\kappa(\rho)\nabla\rho)\right),\\
\partial_t\rho+\diverg(\rho u)&=0,\\
\end{aligned}
\end{equation}
in the domain $(0,T)\times\T^d$ for some fixed time $T>0$, where $\T^d$ is the $d$-dimensional torus.
Here $\rho\geq0$ is the scalar density of a fluid, $u$ is its velocity, $h=h(\rho)$ is the energy density and $\kappa=\kappa(\rho)>0$ is the coefficient of capillarity. 
We place the assumption on the functions $h$ and $\kappa$:
\begin{equation}\label{regassumption}
h, \kappa \in \Ccal^3(\mathcal{T})   
\end{equation}
where, depending on the actual form of $h$ and $\kappa$, the set $\mathcal{T}$ can be chosen to be $[0,\infty)$ or $(0,\infty)$. For instance when $\kappa(\rho) = \frac{1}{\rho}$, as for the QHD system below, then $\mathcal{T}=(0,\infty)$ and we have to be away from vacuum.
\\
\indent
While the analysis of the above system dates back to the 19th century, when the mathematical theory of phase interfaces and capillary effects was introduced,
it still attracts much attention. A modern derivation of the system can be found in \cite{DunnSerrin}. 
Concerning smooth solutions: in \cite{BDD} and \cite{BDDJ} local-in-time well-posedness and stability of special solutions are analysed, respectively. 
A relative energy identity is developed in \cite{GLT}, exploiting the variational structure of the system, and is used to show that  solutions of  \eqref{EKintro}  
converge to smooth solutions of the compressible
Euler system (before shock formation) in the vanishing capillarity limit $\kappa \to 0$, see~\cite{GT}.\\
\\
\indent
The situation with weak solutions is much less understood. Most results concern the Quantum Hydrodynamics system, obtained from \eqref{EKintro}  
when $\kappa(\rho) = \frac{\epsilon_{0}^{2}}{4\rho}$, with $\eps_0$ denoting the Planck constant. This takes the form
\begin{equation}
\label{intro-qhd}
    \begin{aligned}
    \partial_t\rho + \diverg(\rho u) &= 0,\\
    \partial_t(\rho u) + \diverg(\rho u\otimes u) + \nabla p(\rho) &= \frac{\epsilon_{0}^{2}}{2}\rho\nabla\left(\frac{\Delta\sqrt{\rho}}{\sqrt{\rho}}\right).
    \end{aligned}
\end{equation}
The interesting connection between QHD and the Schroedinger equation is used in \cite{GM} to provide conservative weak solutions for the
special case of zero pressure,  $p(\rho) = 0$. Existence of weak solutions for a (relatively limited) class of pressure functions 
is provided in \cite{AntMarc2} and \cite{AntMarc}.  The existence of wild solutions is possible
for \eqref{EKintro}, as pointed out in the recent work Donatelli et al. \cite{DonFeiMar}, where the method of "convex integration" is adapted 
to show non-uniqueness in the class of dissipative global weak solutions.
 
 \medskip
 The possibility of both conservative and dissipative solutions
raises the issue of  studying the Onsager conjecture for the Euler-Korteweg system \eqref{EKintro}.
We use Besov spaces $B_p^{\alpha,\infty}(\Omega)$, with $1 \le p < \infty$, $0 < \alpha < 1$ (see section \ref{besov} for the definition) and 
prove the following theorem:
\begin{theorem}\label{EKTheorem}
Suppose that~\eqref{regassumption} holds. Let $(\rho,u)$ be a solution of~\eqref{EKintro} in the sense of distributions. Assume
\begin{equation}\label{besovhypo}
u \in (B_3^{\alpha,\infty}\cap L^\infty)((0,T)\times\Tbb^d),\hspace{0.3cm}\rho, \nabla\rho, \Delta\rho \in (B_3^{\beta,\infty}\cap L^\infty)((0,T)\times\Tbb^d),\hspace{0.3cm}
\end{equation}
where $1>\alpha\geq\beta> 0$ such that $\min(2\alpha+\beta,\alpha+2\beta)>1.$\\
Then the energy is locally conserved, i.e.
\begin{equation*}\label{EnergyEq}
\begin{aligned}
\int_0^T\int_{\T^d}&\left(\frac{1}{2}\rho|u|^{2} + h(\rho) +\frac12\kappa(\rho)|\nabla\rho|^{2}\right)\partial_t\phi\ \dxdt\\
&\hspace{-2.5cm}+\int_0^T\int_{\T^d}\left(\rho u\left(\frac12|u|^{2} + h'(\rho)+ \frac12\kappa'(\rho)|\nabla\rho|^2 -\diverg(\kappa(\rho)\nabla\rho)\right) + \kappa(\rho)\nabla\rho\diverg(\rho u)\right)\cdot\nabla\phi\ \dxdt = 0
\end{aligned}
\end{equation*}
holds for every $\phi\in\Ccal_c^1((0,T)\times\Tbb^d)$.
\end{theorem}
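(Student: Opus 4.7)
I would follow the Constantin--E--Titi mollification scheme, adapted to compressible fluids by Feireisl et al.~\cite{FGSGW}: mollify the system in space--time, derive an energy identity for the mollified fields modulo commutator errors, and show via Besov estimates that these errors vanish under \eqref{besovhypo}. Let $\eta_\epsilon \in \DC(\R^{d+1})$ be a standard mollifier, write $f^\epsilon := f*\eta_\epsilon$, and set $F := h'(\rho) + \tfrac{1}{2}\kappa'(\rho)|\nabla\rho|^2 - \diverg(\kappa(\rho)\nabla\rho)$. The mollified continuity and momentum equations
\[\partial_t\rho^\epsilon + \diverg(\rho u)^\epsilon = 0, \qquad \partial_t(\rho u)^\epsilon + \diverg(\rho u\otimes u)^\epsilon = -(\rho\nabla F)^\epsilon\]
hold classically on any subinterval compactly contained in $(0,T)$ for $\epsilon$ small.

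\textbf{Mollified energy identity.} I would test the mollified momentum equation against $u^\epsilon\phi$ with $\phi\in\Ccal_c^1((0,T)\times\Tbb^d)$. Using the mollified continuity equation, the convective block reproduces $\tfrac12\rho|u|^2\partial_t\phi + \tfrac12\rho u|u|^2\cdot\nabla\phi$ modulo Constantin--E--Titi commutators. The pressure piece, via $\rho\nabla h'(\rho)\cdot u = \diverg(\rho u\, h'(\rho)) + \partial_t h(\rho)$ (obtained from $\partial_t\rho = -\diverg(\rho u)$), yields the $h(\rho)$ contributions exactly as in \cite{FGSGW}. For the capillary contribution I would use the pointwise identity (verified by a direct calculation from the continuity equation)
\[\begin{aligned} -\rho u\cdot\nabla\bigl(\tfrac{1}{2}\kappa'(\rho)|\nabla\rho|^2 - \diverg(\kappa(\rho)\nabla\rho)\bigr) &= \partial_t\bigl(\tfrac{1}{2}\kappa(\rho)|\nabla\rho|^2\bigr) \\ &\quad + \diverg\bigl(\rho u\bigl(\tfrac{1}{2}\kappa'(\rho)|\nabla\rho|^2 - \diverg(\kappa(\rho)\nabla\rho)\bigr)\bigr) \\ &\quad + \diverg\bigl(\kappa(\rho)\nabla\rho\diverg(\rho u)\bigr),\end{aligned}\]
which, transferred to the mollified level, delivers the last flux terms in the statement together with commutators of the form $R_\epsilon[f,g] := (fg)^\epsilon - f^\epsilon g^\epsilon$ (and spatial derivatives thereof) applied to pairs from $\{u,\rho,\nabla\rho,\Delta\rho\}$ modulated by smooth functions of $\rho$ (permissible by \eqref{regassumption} and the $L^\infty$ bounds).

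\textbf{Commutator control, limit, and main obstacle.} The classical Besov commutator estimate $\|R_\epsilon[f,g]\|_{L^r}\lesssim \epsilon^{\alpha+\beta}\|f\|_{B_p^{\alpha,\infty}}\|g\|_{B_q^{\beta,\infty}}$ (with $1/r=1/p+1/q$) controls each residual; a spatial derivative on $R_\epsilon$ costs one power of $\epsilon$. Grouping the trilinear force and flux errors pairwise, they split into those of size $\epsilon^{2\alpha+\beta-1}$ (two velocity factors paired with one density factor from $\{\rho,\nabla\rho,\Delta\rho\}$) and those of size $\epsilon^{\alpha+2\beta-1}$ (one velocity factor paired with two density factors). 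Both are $o(1)$ exactly under the assumption $\min(2\alpha+\beta,\alpha+2\beta)>1$. Strong $L^3$ convergence $f^\epsilon\to f$, together with the $L^\infty$ bounds that allow one to pass inside the $\Ccal^3$ compositions, then lets me send $\epsilon\to 0$ in the mollified identity and recover the claimed weak energy equality. The main technical obstacle is precisely the capillary term $-\rho\nabla\diverg(\kappa(\rho)\nabla\rho)$: carrying three density derivatives, its rearrangement into divergence form generates many intermediate trilinear products, and one must verify term-by-term that each lands in the $\{2\alpha+\beta>1,\,\alpha+2\beta>1\}$ window by appropriately distributing the regularity among the factors. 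The hypothesis in \eqref{besovhypo} that $\nabla\rho$ and $\Delta\rho$ (not only $\rho$) lie in $B_3^{\beta,\infty}\cap L^\infty$ is tailored for exactly this bookkeeping.
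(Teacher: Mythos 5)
Your overall route --- space--time mollification, a mollified energy balance with commutator remainders, and Besov estimates yielding errors of order $\eps^{2\alpha+\beta-1}$ and $\eps^{\alpha+2\beta-1}$ --- is exactly the paper's, and your pointwise identity for the capillary flux is correct (it is what the paper obtains by multiplying the mollified continuity equation by $h'(\rho^\eps)-\tfrac12\kappa'(\rho^\eps)|\nabla\rho^\eps|^2-\kappa(\rho^\eps)\Delta\rho^\eps$). Two points, however. First, the bilinear estimate $\norm{(fg)^\eps-f^\eps g^\eps}_{L^r}\lesssim\eps^{\alpha+\beta}$ is not enough on its own: the pressure and capillary remainders have the form $f(v^\eps)-f^\eps(v)$ for genuinely nonlinear $f$ (namely $p$ and the stress tensor $S$), and these require a separate quadratic commutator lemma, proved by Taylor expansion about $v(t,x)$ together with Jensen's inequality, giving $\norm{f(v^\eps)-f^\eps(v)}_{L^{q}}\lesssim\norm{v^\eps-v}^2_{L^{2q}}+\sup_{|y|\le\eps}\norm{v(\cdot)-v(\cdot-y)}^2_{L^{2q}}\lesssim\eps^{2\beta}$; your phrase \enquote{modulated by smooth functions of $\rho$} glosses over this, and it is the one genuinely missing ingredient in your write-up. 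Second, the \enquote{main obstacle} you anticipate is largely illusory: the Korteweg force satisfies $-\rho\nabla\bigl(\tfrac12\kappa'(\rho)|\nabla\rho|^2-\diverg(\kappa(\rho)\nabla\rho)\bigr)=\diverg S(\rho,\nabla\rho,\Delta\rho)$ with $S(\rho,q,r)=\bigl(\tfrac12(\rho\kappa'(\rho)+\kappa(\rho))|q|^2+\rho\kappa(\rho)r\bigr)\mathbb{I}-\kappa(\rho)q\otimes q$ a \emph{pointwise algebraic} function of $v=(\rho,\nabla\rho,\Delta\rho)$, so the entire capillary commutator in the momentum equation is a single application of the nonlinear lemma to $v\in(B_3^{\beta,\infty}\cap L^\infty)$, yielding a bound of order $\eps^{2\beta}+\eps^{2\beta+\alpha-1}$ with no trilinear bookkeeping; the remaining capillary commutators, coming from the continuity equation multiplied by $\tfrac12\kappa'(\rho^\eps)|\nabla\rho^\eps|^2$ and $\kappa(\rho^\eps)\Delta\rho^\eps$, are handled by the bilinear decomposition of $\rho^\eps u^\eps-(\rho u)^\eps$ plus the chain-rule bound $\norm{\nabla f(v^\eps)}_{L^3}\lesssim\eps^{\beta-1}$. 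This structural observation is precisely why the hypothesis is placed on $\rho$, $\nabla\rho$, $\Delta\rho$ jointly; with the nonlinear commutator lemma supplied, your argument closes.
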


\begin{remark}
If in addition we assume  the following conditions on $u$ and $\rho$
\[
\lim\limits_{|\xi|,\tau\to 0}\frac{1}{\tau}\int_0^T\frac{1}{|\xi|}\intT|u(t+\tau,x+\xi)-u(t,x)|^3\dxdt = 0,
\]
\[
\lim\limits_{|\xi|,\tau\to 0}\frac{1}{\tau}\int_0^T\frac{1}{|\xi|}\intT|\rho(t+\tau,x+\xi)-\rho(t,x)|^3\dxdt = 0,
\]
then, as pointed out by Shvydkoy \cite{shvydkoy}, see also Duchon and Robert \cite{DuRo}, one can  allow for the case $\alpha=\beta=\frac13$. For details see e.g. Proposition~3 in~\cite{DuRo}.
\end{remark}

The short proof of the main theorem is presented in the following section: it is preceded by an outline of Besov spaces and their basic relevant properties,
some preliminary material on the structure of the Euler-Korteweg system, followed by he main part of the proof in
section \ref{sec:energy}.


\section{Proof of the main theorem}

\subsection{Besov Spaces}\label{besov}

Let $\Omega = (0,T)\times\Tbb^d$. The Besov space $B_p^{\alpha,\infty}(\Omega)$, with $1 \le p < \infty$, $0 < \alpha < 1$,  is the space of functions $w\in L^{p}$ for which the norm 
\begin{equation}\label{besovnorm}
\norm{w}_{B_p^{\alpha,\infty}(\Omega)}:=\norm{w}_{L^p(\Omega)}+\sup_{t>0}\left\{t^{-\alpha}\sup\limits_{|\xi|\leq t}\norm{w(\cdot+\xi)-w}_{L^p(\Omega\cap(\Omega-\xi))}\right\}
\end{equation}
is finite, cf.~\cite{BenShar}. In fact, we can replace the semi-norm in~\eqref{besovnorm} with the following one
\begin{equation}\label{besovseminorm}
\sup_{\xi\in\Omega}\left\{|\xi|^{-\alpha}\norm{w(\cdot+\xi)-w}_{L^p(\Omega\cap(\Omega-\xi))}\right\}.
\end{equation}
Indeed, if $\xi^*$ and $t^*$ realize the suprema in~\eqref{besovnorm} with $|\xi^*|<t^*$, then taking $|\xi^*|<t<t^*$ would contradict the supremality of $t^*$. Therefore neccesarily $\xi^*=t^*$, thus producing~\eqref{besovseminorm}. We choose to think of the Besov norm in terms of~\eqref{besovseminorm}, as it is more convienient for our purposes.

We observe that if $\alpha\geq\beta$, then there is an inclusion $B_p^{\alpha,\infty}(\Omega)\subset B_p^{\beta,\infty}(\Omega)$. Further we remark that the space $(B_p^{\alpha,\infty}\cap L^\infty)(\Omega)$ is a Banach algebra. For details we refer the reader to \cite{BenShar}.

\medskip 

Let $\eta\in C_c^\infty(\R^{d+1})$ be a standard mollification kernel and we denote 
\begin{equation*}
\eta^\eps(x)=\frac{1}{\eps^{d+1}}\eta\left(\frac{x}{\eps}\right), \quad  w^\epsilon=\eta^\epsilon*w \quad \text{and} \quad f^\eps(w) = f(w)*\eta^\eps.
\end{equation*}
Note that the function $w^\epsilon$ is well-defined on $\Omega^\epsilon=\{x\in\Omega: \dist(x,\partial\Omega)>\epsilon\}$. 
The following inequalities will be extensively used in the proof of the main theorem.
\begin{lemma}\label{lemma:besovgrad}
For any function $u\in B_p^{\alpha,\infty}(\Omega)$ we have
\begin{align}
\norm{u(\cdot+\xi)-u(\cdot)}_{L^p(\Omega\cap(\Omega-\xi))}&\leq |\xi|^\alpha\norm{u}_{B_p^{\alpha,\infty}(\Omega)}\label{besovshift}\\ 
\norm{u^\eps-u}_{L^p(\Omega)}&\leq \eps^\alpha\norm{u}_{B_p^{\alpha,\infty}(\Omega)} \label{besoveps}\\
\norm{\nabla u^\eps}_{L^p(\Omega)}&\leq C\eps^{\alpha-1}\norm{u}_{B_p^{\alpha,\infty}(\Omega)} \label{besovepsgradient}
\end{align}
\end{lemma}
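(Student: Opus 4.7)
The plan is to establish the three inequalities in order, each one a quick consequence of the previous together with standard mollifier manipulations. The ingredients are the equivalent seminorm~\eqref{besovseminorm}, Minkowski's integral inequality, and the fact that $\supp \eta^\eps \subset B(0,\eps)$.

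For~\eqref{besovshift}, I would simply invoke~\eqref{besovseminorm}: as argued just above the lemma, the Besov seminorm can be written as $\sup_{\xi} |\xi|^{-\alpha} \|u(\cdot+\xi)-u\|_{L^p(\Omega\cap(\Omega-\xi))}$, and taking any fixed $\xi$ gives~\eqref{besovshift} immediately.

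For~\eqref{besoveps}, I would use $\int \eta^\eps \,dy = 1$ to write
\begin{equation*}
u^\eps(x) - u(x) = \int_{\R^{d+1}} \eta^\eps(y)\bigl(u(x-y) - u(x)\bigr)\,dy,
\end{equation*}
apply Minkowski's integral inequality in $L^p$, and then use~\eqref{besovshift} together with $\supp \eta^\eps \subset B(0,\eps)$ to bound
\begin{equation*}
\|u^\eps - u\|_{L^p(\Omega)} \le \int \eta^\eps(y)\,\|u(\cdot - y) - u\|_{L^p}\,dy \le \|u\|_{B_p^{\alpha,\infty}}\int \eta^\eps(y)|y|^\alpha\,dy \le \eps^\alpha \|u\|_{B_p^{\alpha,\infty}}.
\end{equation*}
The same argument covers~\eqref{besovepsgradient}: since $\int \nabla \eta^\eps \,dy = 0$, I would write
\begin{equation*}
\nabla u^\eps(x) = \int \nabla \eta^\eps(y)\bigl(u(x-y) - u(x)\bigr)\,dy,
\end{equation*}
apply Minkowski and~\eqref{besovshift}, and then evaluate $\int |\nabla \eta^\eps(y)| |y|^\alpha dy$ by the change of variables $z = y/\eps$: using $\nabla \eta^\eps(y) = \eps^{-(d+2)}(\nabla\eta)(y/\eps)$ this integral scales as $\eps^{\alpha-1}$, with the constant $C = \int |\nabla \eta(z)| |z|^\alpha dz$ depending only on $\eta$ and $\alpha$.

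There is no real obstacle here: everything is standard and the only minor bookkeeping concerns the domain of integration (strictly, $u^\eps$ is defined on $\Omega^\eps$, so one should either restrict to $\Omega^\eps$ or extend $u$ by zero and absorb the boundary layer into a constant, both of which are harmless for the downstream use of the lemma in Section~\ref{sec:energy}).
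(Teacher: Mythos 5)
Your proposal is correct. For \eqref{besovshift} and \eqref{besoveps} you follow essentially the paper's route (the paper uses a pointwise Jensen inequality followed by Fubini where you use Minkowski's integral inequality, but these are interchangeable and yield the identical bound). For \eqref{besovepsgradient}, however, you take a genuinely different path: the paper views $u\mapsto \nabla u^\eps$ as a linear operator $T$ which is bounded $L^p\to L^p$ with norm $C\eps^{-1}$ and $W^{1,p}\to L^p$ with unit norm, and then invokes the fact that $B_p^{\alpha,\infty}$ is an interpolation space of exponent $\alpha$ between $L^p$ and $W^{1,p}$ (citing Bennett--Sharpley); you instead exploit the cancellation $\int\nabla\eta^\eps=0$ to write $\nabla u^\eps(x)=\int\nabla\eta^\eps(y)\bigl(u(x-y)-u(x)\bigr)\,dy$ and conclude by Minkowski, \eqref{besovshift}, and the scaling $\int|\nabla\eta^\eps(y)||y|^\alpha\,dy=\eps^{\alpha-1}\int|\nabla\eta(z)||z|^\alpha\,dz$. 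Your argument is more elementary and self-contained, avoids the abstract interpolation machinery, and produces an explicit constant $C=\int|\nabla\eta(z)||z|^\alpha\,dz$; the paper's argument is shorter on the page and generalizes immediately to any operator with the two endpoint bounds, at the cost of importing a nontrivial external result. Your closing remark about the domain of definition of $u^\eps$ (restricting to $\Omega^\eps$ or handling the boundary layer) is a legitimate point that the paper also elides, and it is indeed harmless since the test functions in Section~\ref{sec:energy} are compactly supported in $(0,T)\times\Tbb^d$.
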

\begin{proof}
Inequality~\eqref{besovshift} follows directly from the definition of the norm in the space $B_p^{\alpha,\infty}(\Omega)$. To show~\eqref{besoveps} we write
\begin{equation*}
\begin{aligned}
|u^\eps(x) - u(x)| &\leq\int_{\supp\eta^\eps}\eta^\eps(y)|u(x-y)-u(x)|\ d y\leq\left(\int_{\supp\eta^\eps}\eta^\eps(y)|u(x-y)-u(x)|^p\ d y\right)^{\frac{1}{p}}.     
\end{aligned}
\end{equation*}
Therefore, by virtue of Fubini and~\eqref{besovshift}
\begin{equation*}
\begin{aligned}
\int_{\Omega}|u^\eps(x) - u(x)|^p\ \dx &\leq\int_{\supp\eta^\eps}\eta^\eps(y)\int_{\Omega}|u(x-y)-u(x)|^p\ \dx\ d y\\
&\leq \int_{\supp\eta^\eps}\eta^\eps(y)|y|^{p\alpha}\norm{u}^p_{B_p^{\alpha,\infty}(\Omega)}\ dy \leq \eps^{p\alpha}\norm{u}^p_{B_p^{\alpha,\infty}(\Omega)}.
\end{aligned}
\end{equation*}
For the last of the claimed inequalities we consider the convolution $\nabla u^\eps = \nabla\eta^\eps*u$ as a bounded linear operator $T:L^p(\Omega)\to L^p(\Omega)$. Then
\[
\norm{Tu}_{L^p}\leq C\eps^{-1}\norm{u}_{L^p}.
\]
On the other hand, writing $\nabla u^\eps = \eta^\eps*\nabla u$, we can think of $T$ as mapping $W^{1,p}(\Omega)$ into $L^p(\Omega)$. It then has unit norm.

Therefore, as the Besov space $B_p^{\alpha,\infty}$ is an interpolation space of exponent $\alpha$ for $L^p$ and $W^{1,p}$ (cf.~\cite[Corollary~4.13]{BenShar}), $T$ is bounded as an operator $B_p^{\alpha,\infty}(\Omega)\to L^p(\Omega)$ with
\[
\norm{Tu}_{L^p}\leq C\eps^{-(1-\alpha)}\norm{u}_{B_p^{\alpha,\infty}}.
\]
\end{proof}

\begin{lemma}\label{lemma:compositiongrad}
Let $v\in B_p^{\alpha,\infty}(\Omega,\R^m)$.
Suppose $f:\R^m\to\R$ is a $C^1$ function with $\frac{\partial f}{\partial v_i}\in L^\infty$ for each $i=1,\dots,m$.
Then 
\[
\norm{\nabla f(v^\eps)}_{L^p}\leq C\eps^{\alpha-1}\norm{v}_{B_p^{\alpha,\infty}}
\]
\end{lemma}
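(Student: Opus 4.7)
The plan is to reduce the estimate to the scalar gradient bound \eqref{besovepsgradient} by applying the chain rule to $f(v^\epsilon)$. Since $v^\epsilon$ is smooth on $\Omega^\epsilon$ and $f \in C^1(\mathbb{R}^m)$, I can differentiate pointwise to obtain
\begin{equation*}
\nabla f(v^\epsilon)(x) = \sum_{i=1}^{m} \frac{\partial f}{\partial v_i}(v^\epsilon(x))\,\nabla v_i^\epsilon(x).
\end{equation*}
The hypothesis $\partial f/\partial v_i \in L^\infty$ then gives the pointwise bound $|\nabla f(v^\epsilon)(x)| \le \|Df\|_{L^\infty} \sum_i |\nabla v_i^\epsilon(x)|$, independently of $\epsilon$.

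The second step is to take the $L^p(\Omega)$ norm and apply Lemma \ref{lemma:besovgrad}, specifically inequality \eqref{besovepsgradient}, componentwise to each $v_i$. By the triangle inequality,
\begin{equation*}
\|\nabla f(v^\epsilon)\|_{L^p} \le \|Df\|_{L^\infty} \sum_{i=1}^m \|\nabla v_i^\epsilon\|_{L^p} \le C\,\epsilon^{\alpha - 1} \sum_{i=1}^m \|v_i\|_{B_p^{\alpha,\infty}} \le C\,\epsilon^{\alpha-1}\|v\|_{B_p^{\alpha,\infty}},
\end{equation*}
where in the last step I use the natural definition of the Besov norm of a vector-valued function as the sum (or, equivalently up to a constant, the maximum) of the component norms.

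This proof is essentially a direct consequence of the preceding lemma, so I do not anticipate a substantive obstacle. The only minor point to be careful about is that $f(v^\epsilon)$ is only defined on the interior set $\Omega^\epsilon$, not on $\Omega$, so strictly speaking the $L^p$ norm on the left-hand side should be understood on $\Omega^\epsilon$ (or one extends $v$ to a slightly larger domain before mollifying, which is unproblematic on the torus in the spatial variables and away from $t=0,T$). Aside from this mild bookkeeping, nothing beyond the chain rule, $L^\infty$-boundedness of $Df$, and the already-established bound \eqref{besovepsgradient} is needed.
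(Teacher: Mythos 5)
Your proposal is correct and follows essentially the same argument as the paper: apply the chain rule to $f(v^\eps)$, bound the partial derivatives of $f$ in $L^\infty$, and invoke \eqref{besovepsgradient} componentwise. The remark about $f(v^\eps)$ being defined only on $\Omega^\eps$ is a sensible bookkeeping point that the paper leaves implicit.
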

\begin{proof}
Since $\nabla f(v^\eps) = \sum\limits_{i=1}^m\frac{\partial f}{\partial v_i}(v^\eps)\nabla v^\eps_i$,
we have
\begin{equation*}
\norm{\nabla f(v^\eps)}_{L^p}\leq\sum\limits_{i=1}^m\norm{\frac{\partial f}{\partial v_i}(v^\eps)}_{L^\infty}\norm{\nabla v^\eps_i}_{L^p}\leq\max_{1\leq i\leq m}\norm{\frac{\partial f}{\partial v_i}}_{L^\infty}\sum\limits_{i=1}^m\norm{\nabla v^\eps_i}_{L^p}\leq C\eps^{\alpha-1}\sum\limits_{i=1}^m\norm{v_i}_{B_p^{\alpha,\infty}}
\end{equation*}
where the last inequality follows from Lemma~\ref{lemma:besovgrad}.
\end{proof}

\medskip

\subsection{Preliminaries}
System~\eqref{EKintro} can be written in conservative form
\begin{equation}\label{EKintroConservative}
\begin{aligned}
\partial_t(\rho u)+\diverg(\rho u\otimes u)&=\diverg \mathbb{S},\\
\partial_t\rho+\diverg(\rho u)&=0,\\
\end{aligned}
\end{equation}
where $\mathbb{S}$ is the Korteweg stress tensor
\[
\mathbb{S} = \left( -p(\rho) - \frac{\rho\kappa'(\rho)+\kappa(\rho)}{2}|\nabla\rho|^{2} + \diverg(\rho\kappa(\rho)\nabla\rho) \right)\mathbb{I} 
- \kappa(\rho)\nabla\rho\otimes\nabla\rho
\]
with $\mathbb{I}$ denoting the $d$-dimensional identity matrix and the local pressure defined as
\[ p(\rho) = \rho h'(\rho) - h(\rho).
\]
It is routine to show that a strong solution $(\rho, u)$ of the above system will satisfy the following local balance of total (kinetic and internal) energy
\begin{equation}\label{EnergyEq}
\begin{aligned}
\partial_t&\left(\frac{1}{2}\rho|u|^{2} + h(\rho) +\frac12\kappa(\rho)|\nabla\rho|^{2}\right)\\
&\hspace{-1cm}+ \diverg\left(\rho u\left(\frac12|u|^{2} + h'(\rho)+ \frac12\kappa'(\rho)|\nabla\rho|^2 -\diverg(\kappa(\rho)\nabla\rho)\right) + \kappa(\rho)\nabla\rho\diverg(\rho u)\right) = 0.
\end{aligned}
\end{equation}
Theorem~\ref{EKTheorem} gives sufficient conditions for regularity of weak solutions so that they obey the above energy equality in the sense of distributions.
To prove the theorem we employ the strategy of \cite{ConstETiti}, which was used in many works in the subject, including \cite{FGSGW} and \cite{GMSG}, where variants of the following lemma are an important ingredient.

\begin{lemma}\label{CommutatorEstimates}
Let $1\leq q<\infty$ and suppose $v\in L^{2q}((0,T)\times\T^d;\R^k)$ and $f\in \Ccal^2(\R^k,\R^N)$. If
\[\sup\limits_{i,j}\norm{\frac{\partial^2 f}{\partial v_i\partial v_j}}_{L^\infty} < \infty,
\]
then there exists a constant $C>0$ such that
\begin{equation}\label{quadestimate}
    \norm{f(v^\eps)-f^\eps(v)}_{L^q}\leq C\left(\norm{v^\eps-v}^2_{L^{2q}} + \sup\limits_{(s,y)\in\supp\eta_\eps}\norm{v(\cdot,\cdot)-v(\cdot-s,\cdot-y)}^2_{L^{2q}}\right).
\end{equation}
\end{lemma}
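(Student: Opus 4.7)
The plan is to follow the classical Constantin--E--Titi strategy: Taylor-expand $f$ to second order around the mollified value $v^\eps(x)$, use the fact that $v^\eps(x)$ equals the $\eta^\eps$-weighted average of $v$ to cancel the linear term, and control the quadratic remainder via the assumed $L^\infty$-bound on the second partials of $f$.

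Concretely, first I would observe that, pointwise in $x \in \Omega^\eps$, the identity
\begin{equation*}
f^\eps(v)(x) - f(v^\eps(x)) = \int \eta^\eps(x-y)\, \bigl[ f(v(y)) - f(v^\eps(x)) - \nabla f(v^\eps(x))\cdot(v(y) - v^\eps(x)) \bigr]\, dy
\end{equation*}
holds because the linear contribution integrates to zero against $\eta^\eps(x-y)$ by definition of $v^\eps$. Taylor's theorem together with the uniform bound on the second partials of $f$ then yields the pointwise estimate
\begin{equation*}
\bigl| f^\eps(v)(x) - f(v^\eps(x)) \bigr| \leq C \int \eta^\eps(x-y)\, |v(y) - v^\eps(x)|^2\, dy .
\end{equation*}
At this point I would split $|v(y) - v^\eps(x)|^2 \leq 2|v(y) - v(x)|^2 + 2|v(x) - v^\eps(x)|^2$, which is precisely engineered to produce the two summands on the right-hand side of~\eqref{quadestimate}. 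The $|v(x) - v^\eps(x)|^2$ piece directly yields $\norm{v - v^\eps}_{L^{2q}}^2$ after taking the $L^q$-norm in $x$.

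For the remaining piece, a change of variables $z = x - y$ followed by Minkowski's integral inequality moves the $L^q$-norm inside the $\eta^\eps$-integral, reducing matters to $\int \eta^\eps(z)\, \norm{v(\cdot - z) - v(\cdot)}_{L^{2q}}^2\, dz$, which is bounded by the supremum of the integrand over $z \in \supp \eta^\eps$ since $\eta^\eps$ is a probability density. Relabelling the space-time shift $z$ as $(s,y)$ matches the supremum appearing in~\eqref{quadestimate}. I do not expect a genuine obstacle in any step; the only point of care is bookkeeping about the fact that $\eta^\eps$ is a space-time mollifier on $\R^{d+1}$, so the shifts in the final supremum are genuinely space-time shifts and the domain of integration in the $L^{2q}$-norm must be taken as $\Omega \cap (\Omega - (s,y))$, exactly as in the stated estimate.
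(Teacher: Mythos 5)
Your proof is correct, and it reaches the same estimate by a slightly different decomposition than the paper. The paper performs \emph{two} second-order Taylor expansions, both centered at the pointwise value $v(t,x)$: one for $f(v^\eps(t,x))$ and one for $f(v(s,y))$, the latter then mollified in $(s,y)$ so that the two linear terms $Df(v(t,x))(v^\eps(t,x)-v(t,x))$ coincide and cancel upon subtraction; this produces the two summands of \eqref{quadestimate} directly. You instead expand once, centered at the mollified value $v^\eps(x)$, and kill the linear term by integrating it against $\eta^\eps$ (it vanishes identically because $v^\eps(x)$ is the $\eta^\eps$-average of $v$); the price is the extra splitting $|v(y)-v^\eps(x)|^2\leq 2|v(y)-v(x)|^2+2|v(x)-v^\eps(x)|^2$, which you correctly identify as the step that manufactures the two terms on the right of \eqref{quadestimate}. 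The two routes are logically equivalent in strength, use the same hypothesis (only the uniform bound on the second derivatives enters the constant), and handle the convolution term the same way up to cosmetics (you invoke Minkowski's integral inequality where the paper uses Jensen plus Fubini -- both reduce to $\eta^\eps$ being a probability density). The only nitpick is notational: since $f$ takes values in $\R^N$, the object in your linear term should be the Jacobian $Df(v^\eps(x))$ applied to $v(y)-v^\eps(x)$ rather than a gradient dotted with it, but the argument is componentwise and unaffected.
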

\begin{proof}
We observe that by Taylor's theorem we have
\begin{equation}\label{Taylor1}
\left|f(v^\eps(t,x))-f(v(t,x)))-Df(v(t,x))(v^\eps(t,x)-v(t,x))\right|\leq C|v^\eps(t,x)-v(t,x)|^2
\end{equation}
where the constant $C$ does not depend on the choice of $x$ and $t$. Similarly
\begin{equation}\label{Taylor2}
\left|f(v(s,y))-f(v(t,x))-Df(v(t,x))(v(s,y)-v(t,x))\right|\leq C|v(s,y)-v(t,x)|^2.
\end{equation}
Mollification of the last inequality with respect to $(s,y)$ yields, by virtue of Jensen's inequality
\begin{equation}\label{Taylor3}
\left|f^\eps(v(t,x))-f(v(t,x)-Df(v(t,x))(v^\eps(t,x)-v(t,x))\right|\leq C|v(\cdot,\cdot)-v(t,x)|^2*_{(s,y)}\eta^\eps.
\end{equation}
Combining ~\eqref{Taylor1} and ~\eqref{Taylor3} and using the triangle inequality we deduce the estimate
\begin{equation}\label{nonlinearestimate}
\left|f(v^\eps(t,x))-f^\eps(v(t,x))\right|\leq C\left(|v^\eps(t,x)-v(t,x)|^2 + |v(\cdot,\cdot)-v(t,x)|^2*_{(s,y)}\eta^\eps\right).
\end{equation}
Finally, we observe that
\begin{equation*}
    \begin{aligned}
    \int_{(0,T)\times\T^d}&\left||v(\cdot,\cdot)-v(t,x)|^2*_{(s,y)}\eta^\eps\right|^q\ \dxdt \\
    &\leq \int_{\supp\eta^\eps}\eta^\eps(s,y)\int_{(0,T)\times\T^d}|v(t-s,x-y)-v(t,x)|^{2q}\ \dxdt \ d y d s\\
    &\leq \sup\limits_{(s,y)\in\supp\eta_\eps}\norm{v(\cdot,\cdot)-v(\cdot-s,\cdot-y)}^{2q}_{L^{2q}}.
    \end{aligned}
\end{equation*}
\end{proof}

\subsection{Energy equality}\label{sec:energy}
We begin the proof of the theorem by mollifying the momentum equation in both space and time with kernel and notation as in section~\ref{besov} to obtain
\begin{equation}\label{smoothMomentum}
\partial_t(\rho u)^\eps + \diverg(\rho u\otimes u)^\eps = -\nabla p^\eps(\rho) + \diverg{S^\eps(\rho,\nabla\rho, \Delta \rho)},
\end{equation}
where
\begin{equation}
\begin{split}
S(\rho,q,r)& =   \left( - \frac12( \rho \kappa'(\rho)+\kappa(\rho))q^2+\diverg(\rho\kappa(\rho)q)\right)\mathbb{I} - \kappa(\rho)q\otimes q\\
&= \left(  \frac12( \rho \kappa'(\rho)+\kappa(\rho))q^2+\rho \kappa(\rho)r\right)\mathbb{I} - \kappa(\rho)q\otimes q
\end{split}
\end{equation}
Equation~\eqref{smoothMomentum} can be rewritten in terms of appropriate commutators to give
\begin{equation}\label{smoothEK}
\begin{aligned}
\partial_t&(\rho^\eps u^\eps)+\diverg((\rho u)^\eps\otimes u^\eps)+\nabla p(\rho^{\eps})-\diverg(S(\rho^\eps,\nabla\rho^\eps,\Delta \rho^\eps))\\
&=\partial_t(\rho^\eps u^\eps - (\rho u)^\eps) + \diverg((\rho u)^\eps\otimes u^\eps - (\rho u\otimes u)^\eps) + \nabla\left(p(\rho^\eps) - p^\eps(\rho)\right)\\ &\hspace{0.5cm}-\diverg{\left(S(\rho^\eps,\nabla\rho^\eps, \Delta \rho^\eps) - S^\eps(\rho,\nabla\rho, \Delta \rho)\right)}.
\end{aligned}
\end{equation}
We observe the following identities
\begin{equation*}
    \diverg((\rho u)^\eps\otimes u^\eps) = u^\eps\diverg{(\rho u)^\eps} + ((\rho u)^\eps\cdot\nabla)u^\eps
\end{equation*}
and
\begin{equation*}
-\rho^\epsilon\;\nabla\left(\frac12\kappa'(\rho^\eps)|\nabla\rho^\eps|^2-\diverg(\kappa(\rho^\eps)\nabla\rho^\eps)\right) = \diverg S(\rho^\eps,\nabla\rho^\eps, \Delta \rho^\eps).
\end{equation*}
Thus the left-hand side of equation~\eqref{smoothEK} can be written as
\begin{equation*}
\begin{aligned}
(\partial_t\rho)u^\eps &+ \rho^\eps\partial_t u^\eps + u^\eps\diverg(\rho u)^\eps + ((\rho u)^\eps\cdot\nabla)u^\eps\\
&+ \rho^\epsilon\;\nabla\left(h'(\rho^\eps)-\frac12\kappa'(\rho^\eps)|\nabla\rho^\eps|^2-\kappa(\rho^\eps)\Delta\rho^\eps\right).
\end{aligned}
\end{equation*}
Hence, upon multiplying with $u^\eps$, equation~\eqref{smoothEK} becomes
\begin{equation}\label{smoothEK2}
\begin{aligned}
\rho^\eps&\partial_t\left(\frac12|u^\eps|^2\right)+\left((\rho u)^\eps\cdot\nabla\right)\frac12|u^\eps|^2 + \rho^\eps u^\eps\;\nabla\left(h'(\rho^\eps)-\frac12\kappa'(\rho^\eps)|\nabla\rho^\eps|^2-\kappa(\rho^\eps)\Delta\rho^\eps\right)\\
&=r_1^\eps + r_2^\eps + r_3^\eps + r_4^\eps,
\end{aligned}
\end{equation}
where
\begin{equation*}
    \begin{aligned}
    r_1^\eps &= \partial_t(\rho^\eps u^\eps - (\rho u)^\eps)\cdot u^\eps,\\
    r_2^\eps &= \diverg((\rho u)^\eps\otimes u^\eps - (\rho u\otimes u)^\eps)\cdot u^\eps,\\
    r_3^\eps &= \nabla\left(p(\rho^\eps) - p^\eps(\rho)\right)\cdot u^\eps,\\
    r_4^\eps &= -\diverg{\left(S(\rho^\eps,\nabla\rho^\eps, \Delta \rho^\eps) - S^\eps(\rho,\nabla\rho, \Delta \rho)\right)}\cdot u^\eps.
    \end{aligned}
\end{equation*}
Using the mollified continuity equation
\begin{equation}\label{eq:smoothMass}
\partial_t\rho^\eps+\diverg(\rho u)^\eps=0,
\end{equation}
we can write the first two terms of~\eqref{smoothEK2} as
\begin{equation}\label{eq:eulerterms}
\begin{aligned}
    \rho^\eps&\partial_t\left(\frac12|u^\eps|^2\right)+\left((\rho u)^\eps\cdot\nabla\right)\frac12|u^\eps|^2 + \left(\partial_t\rho^\eps+\diverg(\rho u)^\eps\right)\frac12|u^\eps|^2 \\
    &=\partial_t\left(\frac12\rho^\eps|u^\eps|^2\right) + \diverg\left((\rho u)^\eps\;\frac12|u^\eps|^2\right).
\end{aligned}
\end{equation}
Combining equations~\eqref{smoothEK2} and~\eqref{eq:eulerterms} we obtain
\begin{equation}\label{smoothEK3}
\begin{aligned}
\partial_t&\left(\frac12\rho^\eps|u^\eps|^2\right) + \diverg\left((\rho u)^\eps\;\frac12|u^\eps|^2\right) + \rho^\eps u^\eps\;\nabla\left(h'(\rho^\eps)-\frac12\kappa'(\rho^\eps)|\nabla\rho^\eps|^2-\kappa(\rho^\eps)\Delta\rho^\eps\right)\\
&=r_1^\eps + r_2^\eps + r_3^\eps + r_4^\eps.
\end{aligned}
\end{equation}
We now rewrite the mollified continuity equation~\eqref{eq:smoothMass} in the form
\begin{equation*}
\partial_t\rho^\epsilon+\diverg(\rho^\epsilon u^\epsilon)=\diverg(\rho^\epsilon u^\epsilon-(\rho u)^\epsilon).
\end{equation*}
After multiplying this equation with
\[
h'(\rho^\eps) - \frac12\kappa'(\rho^\eps)|\nabla\rho^\eps|^2 - \kappa(\rho^\eps)\Delta\rho^\eps
\]
and rearranging, we obtain
\begin{equation}\label{smoothMass2}
\begin{aligned}
&\partial_t\left(h(\rho^\eps) + \frac12\kappa'(\rho^\eps)|\nabla\rho^\eps|^2\right) - \diverg\left(\kappa(\rho^\eps)\nabla\rho^\eps\partial_t\rho^\eps\right)\\
&\hspace{0.5cm}+\diverg(\rho^\eps u^\eps)\left(h'(\rho^\eps) - \frac12\kappa'(\rho^\eps)|\nabla\rho^\eps|^2 - \kappa(\rho^\eps)\Delta\rho^\eps\right)\\
&=r_5^\eps + r_6^\eps + r_7^\eps,
\end{aligned}
\end{equation}
where
\begin{equation*}
    \begin{aligned}
    r_5^\eps &= \diverg(\rho^\eps u^\eps - (\rho u)^\eps)\;h'(\rho^\eps),\\
    r_6^\eps &= -\diverg(\rho^\eps u^\eps - (\rho u)^\eps)\;\frac12\kappa'(\rho^\eps)|\nabla\rho^\eps|^2,\\
    r_7^\eps &= -\diverg(\rho^\eps u^\eps - (\rho u)^\eps)\kappa(\rho^\eps)\Delta\rho^\eps.
    \end{aligned}
\end{equation*}
Combining equations~\eqref{smoothEK3} and~\eqref{smoothMass2} we obtain
\begin{equation}\label{smoothEnergy}
\begin{aligned}
\partial_t&\left(\frac12\rho^\eps |u^\eps|^2 + h(\rho^\eps) + \frac12\kappa(\rho^\eps)|\nabla\rho^\eps|^2\right) + \diverg{\left((\rho u)^\eps\;\frac12|u^\eps|^2\right)}\\
&+\diverg\left(\rho^\eps u^\eps\left(h'(\rho^\eps) - \frac12\kappa'(\rho^\eps)|\nabla\rho^\eps|^2 - \kappa(\rho^\eps)\Delta\rho^\eps\right) + \kappa(\rho^\eps)\nabla\rho^\eps\diverg(\rho^\eps u^\eps) \right)\\
&\hspace{-0.5cm}=r_1^\eps + r_2^\eps + r_3^\eps + r_4^\eps + r_5^\eps + r_6^\eps + r_7^\eps.
\end{aligned}
\end{equation}
It follows that to prove the theorem it is sufficient to show that each commutator error term converges to zero in the distributional sense on $(0,T)\times\T^d$ as $\epsilon\to 0$.

\subsection{Commutator Estimates}
Let $\phi\in \Ccal_{c}^{1}((0,T)\times\mathbb{T}^{d})$ and take $\epsilon>0$ small enough so that $\supp\phi\subset (\epsilon,T-\epsilon)\times\Tbb^d$. We will show that for each $1\leq i\leq 7$ we have
\begin{equation*}
R_i^\eps\coloneqq\int_0^T\intT r_i^\eps\phi\ \dxdt \xrightarrow{\eps\to 0^+} 0.
\end{equation*}
The terms $R_1^\eps$ and $R_2^\eps$ are dealt with in the same way as in~\cite{FGSGW}. We recall these estimates for the reader's convenience.
For $R_1^\epsilon$ we observe that
\begin{equation}\label{pointwisedecomp}
\begin{aligned}
\rho^\eps u^\eps-(\rho u)^\eps&= (\rho^\eps-\rho)(u^\eps-u)\\
&\hspace{0.3cm}-\int_{-\eps}^\eps\intT\eta^\eps(\tau,\xi)(\rho(t-\tau,x-\xi)-\rho(t,x))(u(t-\tau,x-\xi)-u(t,x)) d\xi d\tau.
\end{aligned}
\end{equation}
The first part of $R_1^\epsilon$ therefore can be estimated by virtue of an integration by parts, H\"{o}lder inequality and estimates~\eqref{besoveps} and~\eqref{besovepsgradient} as
\begin{equation*}
\begin{aligned}
&\left|\int_0^T\intT\phi\partial_t\left((\rho^\eps-\rho)(u^\eps-u)\right)\cdot u^\eps\ \dxdt\right|\\
&\hspace{0.5cm}\leq\int_0^T\intT |(\rho^\eps-\rho)(u^\eps-u)|(|\partial_t\phi\; u^\eps| + |\phi\partial_t u^\eps|)\ \dxdt \\
&\hspace{0.5cm}\leq\norm{\phi}_{\Ccal^1}\norm{\rho^\eps-\rho}_{L^3}\norm{u^\eps-u}_{L^3}\norm{u^\eps}_{L^3} + \norm{\phi}_{\Ccal^0}\norm{\rho^\eps-\rho}_{L^3}\norm{u^\eps-u}_{L^3}\norm{\nabla u^\eps}_{L^3}\\
&\hspace{0.5cm}\leq C\eps^\beta\eps^\alpha\norm{\rho}_{B_3^{\beta,\infty}}\norm{u}^2_{B_3^{\alpha,\infty}}+ C\eps^\beta\eps^\alpha\eps^{\alpha-1}\norm{\rho}_{B_3^{\beta,\infty}}\norm{u}^2_{B_3^{\alpha,\infty}}
\end{aligned}
\end{equation*}
For the second part of $R_1^\epsilon$ according to~\eqref{pointwisedecomp}, we estimate (using integration by parts, Fubini,~\eqref{besovshift} and~\eqref{besovepsgradient})
\begin{equation*}
\begin{aligned}
&\left|\int_0^T\intT\phi\partial_t\int_{-\eps}^\eps\intT\eta^\eps(\tau,\xi)(\rho(t-\tau,x-\xi)-\rho(t,x))(u(t-\tau,x-\xi)-u(t,x)) d\xi d\tau\cdot u^\epsilon \dxdt\right|\\
&\leq C\norm{\phi}_{\Ccal^1}\eps^\beta\eps^\alpha\norm{\rho}_{B_3^{\beta,\infty}}\norm{u}^2_{B_3^{\alpha,\infty}}+C\norm{\phi}_{\Ccal^0}\eps^\beta\eps^\alpha\eps^{\alpha-1}\norm{\rho}_{B_3^{\beta,\infty}}\norm{u}^2_{B_3^{\alpha,\infty}}.
\end{aligned}
\end{equation*}
A similar estimation can be carried out for $R_2^\epsilon$. We write
\begin{equation*}
\begin{aligned}
(\rho u)^\eps\otimes u^\eps&-(\rho u\otimes u)^\eps=((\rho u)^\eps-\rho u)\otimes(u^\eps-u)\\
&-\int_{-\eps}^\eps\intT\eta^\eps(\tau,\xi)(\rho u(t-\tau,x-\xi)-\rho u(t,x))\otimes(u(t-\tau,x-\xi)-u(t,x))d\xi d\tau.
\end{aligned}
\end{equation*}
We observe that since $\alpha\geq\beta$ and the space $B_3^{\beta,\infty}\cap L^\infty$ is an algebra, we have $\rho u\in (B_3^{\beta,\infty}\cap L^\infty)((0,T)\times\T^d)$.
Thus the first part of $R_2^\epsilon$ can be estimated as
\begin{equation*}
\begin{aligned}
&\left|\int_0^T\intT\diverg\left(((\rho u)^\eps-\rho u)\otimes(u^\eps-u)\right)\cdot \phi u^\eps\ \dxdt\right|\\
&\leq\norm{\phi}_{C^1}\norm{(\rho u)^\eps-\rho u}_{L^3}\norm{u^\eps-u}_{L^3}\norm{u^\eps}_{L^3}+\norm{\phi}_{C^0}\norm{(\rho u)^\eps-\rho u}_{L^3}\norm{u^\eps-u}_{L^3}\norm{\nabla u^\eps}_{L^3}
\\
&\leq C\eps^\beta\eps^\alpha\norm{\rho}_{B_3^{\beta,\infty}}\norm{u}^2_{B_3^{\alpha,\infty}} + C\eps^\beta\eps^\alpha\eps^{\alpha-1}\norm{\rho u}_{B_3^{\beta,\infty}}\norm{u}^2_{B_3^{\alpha,\infty}}.
\end{aligned}
\end{equation*}
Likewise, for the second part of $R_2^\epsilon$ we get
\begin{equation*}
\begin{aligned}
&\left|\int_0^T\intT\diverg\left\{\int_{-\eps}^\eps\intT\eta^\eps(\tau,\xi)(\rho u(t-\tau,x-\xi)-\rho u(t,x))\otimes(u(t-\tau,x-\xi)-u(t,x))d\xi d\tau\right\}\cdot \phi u^\eps \dxdt\right|\\
&\leq C\norm{\phi}_{\Ccal^0}\eps^\beta\eps^\alpha\eps^{\alpha-1}\norm{\rho u}_{B_3^{\beta,\infty}}\norm{u}^2_{B_3^{\alpha,\infty}}+C\norm{\phi}_{\Ccal^1}\eps^\beta\eps^\alpha\norm{\rho}_{B_3^{\beta,\infty}}\norm{u}^2_{B_3^{\alpha,\infty}}.
\end{aligned}
\end{equation*}
These estimates show that $R_1^\eps$ and $R_2^\eps$ vanish as $\eps\to 0$.\\
To estimate terms $R_3^\eps$ and $R_4^\eps$ we integrate by parts and apply Lemma~\ref{CommutatorEstimates} to get the following
\begin{equation*}
    \begin{aligned}
    |R_3^\eps|&\leq\norm{\phi}_{\Ccal^1}\int_0^T\intT|p(\rho^\eps)-p^\eps(\rho)||u^\eps|\ \dxdt +\norm{\phi}_{\Ccal^0}\int_0^T\intT|p(\rho^\eps)-p^\eps(\rho)||\nabla u^\eps|\ \dxdt\\
    &\leq C\norm{p(\rho^\eps)-p^\eps(\rho)}_{L^{3/2}}(\norm{u^\eps}_{L^3}+\norm{\nabla u^\eps}_{L^3})\\
    &\leq C\left(\norm{\rho^\eps-\rho}_{L^3}^2+ \sup\limits_{y\in\supp\eta^\eps}\norm{\rho(\cdot)-\rho(\cdot-y)}_{L^3}^2
    \right)\left(1+\eps^{\alpha-1}\right)\norm{u}_{B_3^{\alpha,\infty}}\\
    &\leq C\left(\eps^{2\beta}\norm{\rho}_{B_3^{\beta,\infty}}^2+ \sup\limits_{|y|\leq\eps}|y|^{2\beta}\norm{\rho}_{B_3^{\beta,\infty}}^2
    \right)\left(1+\eps^{\alpha-1}\right)\norm{u}_{B_3^{\alpha,\infty}}\\
    &\leq C(\eps^{2\beta}+\eps^{2\beta+\alpha-1})\norm{u}_{B_3^{\alpha,\infty}}\norm{\rho}^2_{B_3^{\beta,\infty}}
    \end{aligned}
\end{equation*}
and similarly
\begin{equation*}
    \begin{aligned}
    |R_4^\eps|
    &\leq C\norm{S(\rho^\eps, \nabla\rho^\eps, \Delta\rho^\eps)-S^\eps(\rho,\nabla\rho, \Delta\rho)}_{L^{3/2}}(\norm{u^\eps}_{L^3}+\norm{\nabla u^\eps}_{L^3})\\
    &\leq C(\eps^{2\beta}+\eps^{2\beta+\alpha-1})\norm{u}_{B_3^{\alpha,\infty}}(\norm{\rho}^2_{B_3^{\beta,\infty}}+\norm{\nabla\rho}^2_{B_3^{\beta,\infty}}+\norm{\Delta\rho}^2_{B_3^{\beta,\infty}}).
    \end{aligned}
\end{equation*}
It now remains to estimate the last three commutator errors $R_5^\eps$, $R_6^\eps$ and $R_7^\eps$.
To this end we employ Lemma~\ref{lemma:compositiongrad} with function $f$ being $h'(\rho)$, $\kappa'(\rho)|\nabla\rho|^2$, and $\kappa(\rho)\Delta\rho$, respectively. We observe that by assumptions~\eqref{regassumption} and~\eqref{besovhypo} these functions belong to $L^\infty$.
Using again equality~\eqref{pointwisedecomp} we can estimate as follows.
\begin{equation*}
    \begin{aligned}
    |R_5^\eps| &\leq \int_0^T\intT |(\rho^\eps-\rho)(u^\eps-u)|(|h'(\rho^\eps)\nabla\phi|+|\phi\nabla h'(\rho^\eps)|)\ \dxdt\\
    &\leq \norm{\phi}_{\Ccal^1}\norm{\rho^\eps-\rho}_{L^3}\norm{u^\eps-u}_{L^3}\norm{h'(\rho^\eps)}_{L^3} + \norm{\phi}_{\Ccal^0}\norm{\rho^\eps-\rho}_{L^3}\norm{u^\eps-u}_{L^3}\norm{\nabla h'(\rho^\eps)}_{L^3}\\
    &\leq C\eps^\beta\eps^\alpha\norm{\rho}_{B_3^{\beta,\infty}}\norm{u}_{B_3^{\alpha,\infty}} + C\eps^\beta\eps^\alpha\eps^{\beta-1}\norm{\rho}^2_{B_3^{\beta,\infty}}\norm{u}_{B_3^{\alpha,\infty}},\\ \\
    |R_6^\eps|
    &\leq C (\norm{\kappa'(\rho^\eps)|\nabla\rho^\eps|^2}_{L^3}+\norm{\nabla(\kappa'(\rho^\eps)|\nabla\rho^\eps|^2)}_{L^3})\norm{\rho^\eps-\rho}_{L^3}\norm{u^\eps-u}_{L^3}\\
    &\leq C\eps^\beta\eps^\alpha\norm{\rho}_{B_3^{\beta,\infty}}\norm{u}_{B_3^{\alpha,\infty}} + C\eps^\beta\eps^\alpha\eps^{\beta-1}\norm{\rho}^2_{B_3^{\beta,\infty}}\norm{u}_{B_3^{\alpha,\infty}},
    \end{aligned}
\end{equation*}
and
\begin{equation*}
    \begin{aligned}
    |R_7^\eps|
    &\leq C (\norm{\kappa(\rho^\eps)\Delta\rho^\eps}_{L^3}+\norm{\nabla(\kappa(\rho^\eps)\Delta\rho^\eps)}_{L^3})\norm{\rho^\eps-\rho}_{L^3}\norm{u^\eps-u}_{L^3}\\
&\leq C\eps^\beta\eps^\alpha\norm{\rho}_{B_3^{\beta,\infty}}\norm{u}_{B_3^{\alpha,\infty}} + C\eps^\beta\eps^\alpha\eps^{\beta-1}\norm{\rho}^2_{B_3^{\beta,\infty}}\norm{u}_{B_3^{\alpha,\infty}}.
    \end{aligned}
\end{equation*}
For brevity the above calculations include only the first term coming from~\eqref{pointwisedecomp}, with the second term easily seen to produce estimates of the same order.\\
Thus the proof of the theorem is complete.

\subsection*{Acknowledgement}
This work was partially supported by the Simons - Foundation grant 346300 and the Polish Government MNiSW 2015-2019 matching fund;
AET thanks the Institute of Mathematics of the Polish Academy of Sciences, Warsaw, for their hospitality 
during his stay as a Simons Visiting Professor. P.G. and A.\'{S}-G. received support from the National Science Centre (Poland), 2015/18/M/ST1/00075. T.D acknowledges the support of the National
Science Centre (Poland), 2012/05/E/ST1/02218.

\end{document}